
\documentclass[letterpaper, 10pt, conference]{ieeeconf}      

\IEEEoverridecommandlockouts                              
\overrideIEEEmargins

\usepackage{amsmath,graphicx,amsfonts,amssymb,epsfig,mathrsfs}
 \usepackage{amsthm}
\usepackage{subcaption}
\usepackage{color}
\usepackage{multirow}
\usepackage{rotating}
\usepackage{algorithm, algpseudocode,algorithmicx,dsfont}
\usepackage{cite}
\usepackage{url,framed,bm}

\setlength{\tabcolsep}{1.1pt}

\newtheorem{theorem}{Theorem}

\newtheorem{remark}{Remark}

\newcommand{\bQ}{\bm{Q}}

\newcommand{\bH}{\bm{H}}
\newcommand{\bI}{\bm{I}}
\newcommand{\bpi}{\bm{\pi}}
\newcommand{\balpha}{\bm{\alpha}}
\newcommand{\bbeta}{\bm{\beta}}
\newcommand{\bgamma}{\bm{\gamma}}
\newcommand{\bc}{\bm{c}}
\newcommand{\bp}{\bm{p}}
\newcommand{\bq}{\bm{q}}
\newcommand{\bx}{\bm{x}}
\newcommand{\differential}{\mathrm{d}}
\newcommand{\prox}{{\rm{prox}}}

\allowdisplaybreaks

\IEEEoverridecommandlockouts
\begin{document}

\title{Proximal Recursion for the Wonham Filter\thanks{Supported in part by NSF under grants 1665031, 1807664 and 1839441, and by AFOSR under grant FA9550-17-1-0435.
}}
\author{Abhishek Halder$^{1}$\thanks{$^{1}$Department of Applied Mathematics, University of California, Santa Cruz,
        CA 95064, USA,
        {\tt\small{ahalder@ucsc.edu}}}%
        , and Tryphon T. Georgiou$^{2}$\thanks{$^{2}$Department of Mechanical and Aerospace Engineering, University of California, Irvine,
        CA 92617, USA,
        {\tt\small{tryphon@uci.edu}}}%
        }
\maketitle

\definecolor{gray}{RGB}{200,200,200}
\newcommand{\gray}{\color{gray}}
\newcommand{\blue}{\color{blue}}

\begin{abstract}
This paper contributes to the emerging viewpoint that governing equations for dynamic state estimation, conditioned on the history of noisy measurements, can be viewed as gradient flow on the manifold of joint probability density functions with respect to suitable metrics. Herein, we focus on the Wonham filter where the prior dynamics is given by a continuous time Markov chain on a finite state space; the measurement model includes noisy observation of the (possibly nonlinear function of) state. We establish that the posterior flow given by the Wonham filter can be viewed as the small time-step limit of proximal recursions of certain functionals on the probability simplex. The results of this paper extend our earlier work where similar proximal recursions were derived for the Kalman-Bucy filter.
\end{abstract}


\section{Introduction}
We consider the problem of estimating the state of a continuous time Markov chain $X(t)$ on finite state space $\Omega = \{a_{1}, \hdots, a_{m}\}$ with $m\times m$ transition\footnote{We suppose that the Markov chain is time-homogeneous, i.e., the associated transition probability matrix is $\exp(\bQ t)$ for all $t\geq 0$.} rate matrix $\bQ$, with
entries $Q_{ij} \geq 0$, for $i\neq j$, and $Q_{ii} = - \sum_{j\neq i} Q_{ij} < 0$.
To ease notation, we hereafter write $X(t) \sim \text{Markov}\left(\bQ\right)$. Suppose that one observes the process $Z(t)$ governed by the It\^{o} stochastic differential equation (SDE)
\begin{align}
\differential Z(t) = h\left(X(t)\right)\:\differential t + \sigma_{V}(t) \:\differential V(t),
\label{Observn}	
\end{align}
where $h(\cdot)$ is a deterministic injective function of the state, $\sigma_{V}(t)$ is continuously differentiable and bounded away from zero for all $t\geq 0$, and the standard Wiener process $V(t)$ is independent of the process $X(t)$. One typically refers to $h(\cdot)$ as the sensing or measurement model, and $V(t)$ as the measurement noise. Given the history of noisy observation $\{Z(s), 0\leq s \leq t\}$, the objective of the estimation problem is to compute the \emph{conditional probability} of the state $X(t)$, i.e., to compute the \emph{posterior probabilities}
\begin{align}
\!\!\!\pi_{i}^{+}(t) \!:=\! \mathbb{P}\big\{X(t)=a_{i} \!\mid\! Z(s), 0\leq s \leq t \big\}, \, i=1,\hdots,m. 
\label{DefPosterior}	
\end{align}

Let the initial occupation probability (row) vector be $\bpi_{0}$ satisfying $\bpi_{0}\geq 0$ element-wise, and $\bpi_{0} {\mathds{1}} = 1$, where ${\mathds{1}}$ denotes a column vector of ones. The time evolution of the \emph{prior distribution} $\bpi^{-}(t) := \{\pi_{1}^{-}(t), \hdots, \pi_{m}^{-}(t)\}$ is governed by the ordinary differential equation (ODE) 
\begin{eqnarray}
\dot{\bpi}^{-}(t) = \bpi^{-}(t) \bQ, \qquad \bpi^{-}(0) = \bpi_{0}. 
\label{PriorEvolution}	
\end{eqnarray}
In other words, (\ref{PriorEvolution}) gives the \emph{unconditional probabilities} of the state $X(t)$, i.e., $\pi_{i}^{-}(t)=\mathbb{P}(X(t)=a_{i})$, $i=1,\hdots,m$.

In \cite{wonham1964some}, Wonham showed that for the state-observation model given by
\begin{subequations}\label{StateObsv}	
\begin{align}
X(t) &\sim \text{Markov}\left(\bQ\right), \label{ProcessModel}\\
\differential Z(t) &= h\left(X(t)\right)\:\differential t + \sigma_{V}(t) \:\differential V(t), \label{ObsModel}	
\end{align}
the posterior probability $\bpi^{+}(t):=\{\pi_{1}^{+}(t), \hdots, \pi_{m}^{+}(t)\}$ evolves according to the It\^{o} SDE
\end{subequations}
\begin{align}
\differential\bpi^{+}(t) = \bpi^{+}(t) \bQ \: \differential t \:+\:&\frac{1}{\left(\sigma_{V}(t)\right)^{2}}\bpi^{+}(t)\left(\bH - \widehat{h}(t)\bI\right)\times\nonumber\\
&\left(\differential Z(t) - \widehat{h}(t)\differential t\right),
\label{WonhamPosteriorSDE}	
\end{align}
with initial condition $\bpi^{+}(0) = \bpi_{0}$, where
\begin{align*}
\bH := {\rm{diag}}\left(h(a_{1}), \hdots, h(a_{m})\right), \quad \widehat{h}(t) := \displaystyle\sum_{i=1}^{m} h(a_{i}) \pi_{i}^{+}(t).
\end{align*}
The vector SDE (\ref{WonhamPosteriorSDE}) has since been known as the Wonham filtering equation that allows computing the conditional probabilities of the state. Reference \cite[eqn. (21)]{wonham1964some} derived (\ref{WonhamPosteriorSDE}) with $h(\cdot)$ as the identity map; the form (\ref{WonhamPosteriorSDE}) has appeared in the literature since then -- for recent references see e.g., \cite[eqn. (2)]{zhang2007two} and \cite[eqn. (5)]{yang2016feedback}. Having obtained $\bpi^{+}$ from (\ref{WonhamPosteriorSDE}), assuming that the points in $\Omega$ are elements of a linear space, one can compute the optimal (in the minimum mean squared error sense) state estimate  given by the conditional expectation
\begin{align}
\widehat{X}(t) := \mathbb{E}\left[X(t) \mid Z(s), 0\leq s \leq t\right] = \sum_{i=1}^{m}a_{i}\pi_{i}^{+}(t).
\label{CondExpectation}	
\end{align}

The purpose of this paper is to give new variational interpretation of the flow $\bpi^{+}(t)$ governed by (\ref{WonhamPosteriorSDE}). Specifically, we seek a gradient flow description for the evolution of the posterior or conditional probability on the standard simplex 
\begin{align}
\Delta^{m-1}:=\{\bpi\in\mathbb{R}_{\geq 0}^{m} \mid \bpi{{\mathds{1}}}=1\}.
\label{Simplex}	
\end{align}
Such interpretations were uncovered in \cite{laugesen2015poisson} for nonlinear filtering with zero prior dynamics and, more generally, in our recent works \cite{halder2017gradient,halder2018gradient} for the Kalman-Bucy filter. Results of such flavor are not only fundamental in systems-theoretic context, but may also be transformative in computation since they open up the possibility to solve the filtering equations via proximal algorithms \cite{rockafellar1976monotone,parikh2014proximal}. This is pursued in \cite{caluya2019proximal,caluya2019gradient} for fast computation of the prior joint probability density functions without spatial discretization.

This paper is structured as follows. In Section \ref{SecMainIdea}, we outline the main ideas for gradient flow formulation via proximal recursion. The recursions for computing the posterior in the Wonham filter are derived in Section \ref{SecProxPosterior}. In Section \ref{SecProxPrior}, proximal recursion for computing the prior is given for the case of reversible Markov chain. Numerical examples are given in Section \ref{SecNumericalExample} for both reversible and non-reversible Markov chains to illustrate the scope of the proposed framework. Section \ref{SecConclusion} concludes the paper.

\subsubsection*{Notations} We use $\circ$ to denote function composition, and $\langle\cdot,\cdot\rangle$ to denote the standard Euclidean inner product. The notation $\nabla_{\bx}$ stands for standard Euclidean gradient w.r.t. vector $\bx$. Furthermore, $\odot$ denotes elementwise multiplication. The notation $\exp(\cdot)$ with vector argument means elementwise exponential, and the same with matrix argument denotes the exponential matrix.


\section{Main Idea}\label{SecMainIdea}
We adopt a metric viewpoint of gradient flow that approximates the flow of probability distribution $\bpi(t)$ starting from a given initial condition $\bpi_{0}:=\bpi(0)$, as the small time-step limit of a variational recursion in the form
\begin{align}
\bp_{k}(\lambda) = \underset{\bp}{\arg\inf}\:\frac{1}{2}d^{2}\left(\bp,\bp_{k-1}\right) \:+\: \lambda\Phi(\bp),
\label{JKOgeneral}	
\end{align}
where $\bp_{0}\equiv\bpi_{0}$, $k\in\mathbb{N}$, and $\lambda$ is the step-size. Here, $d(\cdot,\cdot)$ is a distance functional between two probability distributions, and the functional $\Phi(\cdot)$ depends on the generator of the flow $\bpi(t)$. In particular, the functionals $d(\cdot,\cdot)$ and $\Phi(\cdot)$ are to be chosen such that $\bp_{k}(h)\rightarrow\bpi(t=k\lambda)$ as $\lambda\downarrow 0$.

\begin{figure}[t]
\centering
\includegraphics[width=.45\textwidth]{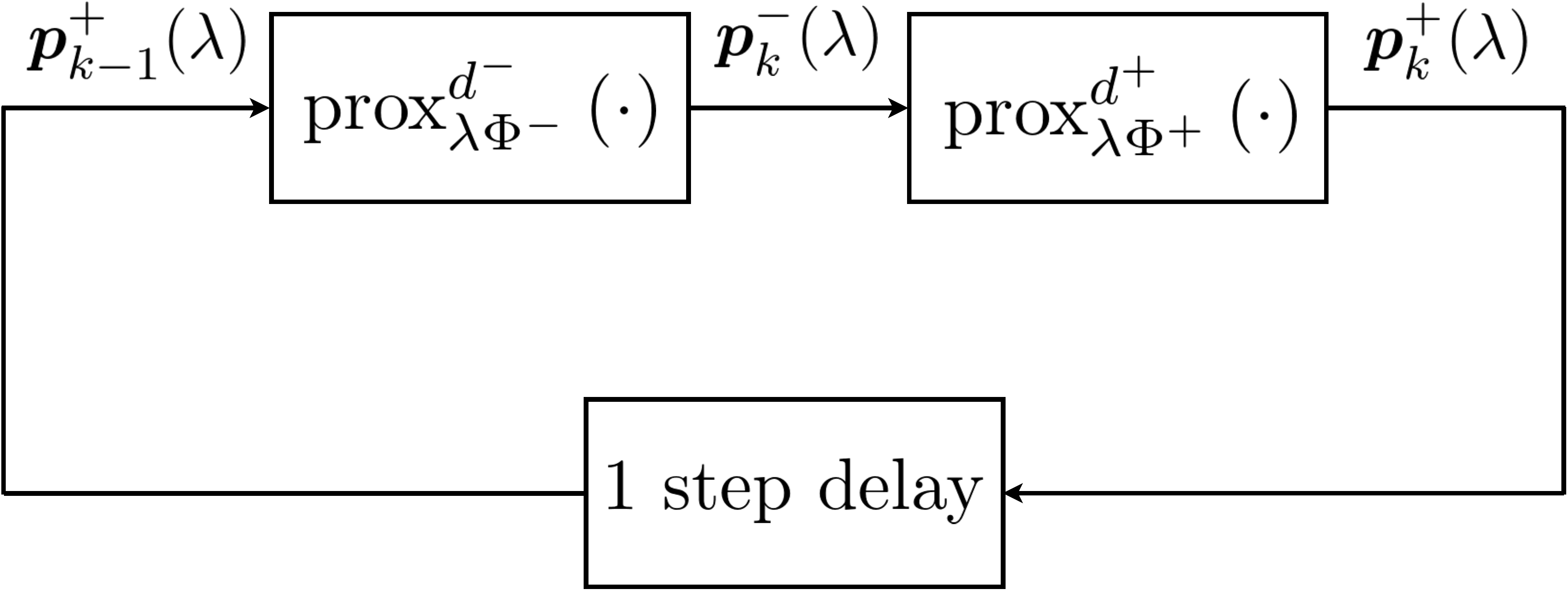}
\caption{{\small{The block diagram showing the prior and posterior computation as sequential proximal updates given by (\ref{PriorPosteriorProx}) for $k\in\mathbb{N}$.}}}
\vspace*{-0.2in}
\label{BlockDiagm}
\end{figure}

The recursion (\ref{JKOgeneral}) is reminiscent of the Euclidean setting, where the gradient flow for the ODE $\dot{\bx} = -\nabla_{\bx}\Phi(\bx)$ can be approximated via a recursion of the form (\ref{JKOgeneral}) with $d(\cdot,\cdot)$ as the Euclidean distance metric, and $\Phi(\cdot,\cdot)$ as in the argument generating the vector field. In the optimization literature, the operators associated with such recursions are termed as \emph{Moreau-Yosida proximal operators} \cite{moreau1965proximite,yosid1964,rockafellar1976monotone,parikh2014proximal}, denoted as
\[\prox_{\lambda\Phi}^{d},
\]
which reads, proximal operator for functional $\lambda\Phi$ w.r.t.\ $d$. Likewise, we use the same notation for the right-hand-side of (\ref{JKOgeneral}) in our more general setting. This allows interpreting the discrete time-stepping as steepest descent of the functional $\Phi$ w.r.t. distance $d$. Proximal operators have also been used in general Hilbert spaces \cite{bauschke2011monotone}, and in the space of probability density functions \cite{halder2017gradient,halder2018gradient,caluya2019proximal,caluya2019gradient,jordan1998variational,ambrosio2008gradient,zhang2018mean}. The idea of applying proximal recursion in the space of probability measures appeared first in \cite{jordan1998variational}; see also \cite{ambrosio2008gradient}.

In the filtering context, we think of the computation of prior followed by that of posterior, as composition of respective proximal operators. Denoting the approximate prior and posterior probability vectors for the proximal recursions associated with Wonham filter as $\bp_{k}^{-}$ and $\bp_{k}^{+}$, respectively, we write
\begin{subequations}\label{PriorPosteriorProx}
\begin{align}
\bp_{k}^{-}(\lambda) &= \prox_{\lambda\Phi^{-}}^{d^{-}}\left(\bp_{k-1}^{+}\right) \nonumber\\
&= \underset{\bp\in\Delta^{m-1}}{\arg\inf}\:\frac{1}{2}\left(d^{-}\left(\bp,\bp_{k-1}^{+}\right)\right)^{2} + \lambda\Phi^{-}(\bp), \label{PriorProx}\\	
\bp_{k}^{+}(\lambda) &= \prox_{\lambda\Phi^{+}}^{d^{+}}\left(\bp_{k}^{-}\right) \nonumber\\
&= \underset{\bp\in\Delta^{m-1}}{\arg\inf}\:\frac{1}{2}\left(d^{+}\left(\bp,\bp_{k}^{-}\right)\right)^{2} + \lambda\Phi^{+}(\bp), \label{PosteriorProx}	
\end{align}
\end{subequations}
where $k\in\mathbb{N}$, $\lambda>0$ is the step-size, and $\left(d^{\pm},\Phi^{\pm}\right)$ are to be determined functional pairs guaranteeing $\bp_{k}^{+}(\lambda) \rightarrow \bpi^{+}(t=k\lambda)$ as $\lambda \downarrow 0$, wherein $\bpi^{+}(t)$ solves (\ref{WonhamPosteriorSDE}). In other words, $\left(d^{\pm},\Phi^{\pm}\right)$ are to be designed such that the composite map $\prox_{\lambda\Phi^{+}}^{d^{+}}\circ\prox_{\lambda\Phi^{-}}^{d^{-}}$ approximates the flow of (\ref{WonhamPosteriorSDE}) in small time-step limit (see Fig. \ref{BlockDiagm}). 

Next, we focus on the problem of designing the pair $\left(d^{+},\Phi^{+}\right)$ in (\ref{PosteriorProx}). 


\section{Proximal Recursion for the Posterior}\label{SecProxPosterior}
We first derive a proximal recursion of the form (\ref{PosteriorProx}) for the posterior update in the special case $\bQ\equiv\bm{0}$ (Section \ref{SubsecZPD}). The proof for the same recovers the explicit stochastic integral formula given in \cite[eqn. (5)]{wonham1964some}. We then show that the same proximal recursion applies for the general $\bQ\neq\bm{0}$ case (Section \ref{SubsecGeneral}).

\subsection{The Case of Zero Prior Dynamics}\label{SubsecZPD}
As in \cite[Section 2]{wonham1964some}, we start with the simple case when the state $X$, instead of being a Markov chain, is a random variable taking values in $\Omega = \{a_{1}, \hdots, a_{m}\}$ with a (known) prior probability distribution $\bp_{0}\in\Delta^{m-1}$ at $t=0$.

For $k\in\mathbb{N}$, let $t_{k-1}:=(k-1)\lambda$ where $\lambda$ is the step size, and let $\{Z_{k-1}\}_{k\in\mathbb{N}}$ be the sequence of samples of the process $Z(t)$ at $\{t_{k-1}\}_{k\in\mathbb{N}}$. Introducing $Y_{k-1} := (Z_{k} - Z_{k-1})/\lambda$, we consider the functional
\begin{align}
\Phi^{+}(\bp) := \frac{1}{2\left(\sigma_{V}(t_{k-1})\right)^{2}}\mathbb{E}\left[\left(Y_{k-1} - h\right)^{2}\right],
\label{PhiPlus}	
\end{align}
where the expectation operator $\mathbb{E}\left[\cdot\right]$ is taken w.r.t. the probability vector $\bp\in\Delta^{m-1}$. The following result shows that with $\Phi^{+}$ as in (\ref{PhiPlus}), the functional $\frac{1}{2}(d^{+})^{2}$ in (\ref{PosteriorProx}) can be taken as the Kullback-Leibler divergence $D_{{\rm{KL}}}$, given by
\begin{align}
D_{{\rm{KL}}}(\balpha\parallel\bbeta) \!:= \!\sum_{i=1}^{m}\!\alpha_{i}\log(\alpha_{i}/\beta_{i}), \;\text{for}\;\balpha,\bbeta\in\Delta^{m-1}.
\label{KLdef}	
\end{align}
In other words, (\ref{PosteriorProx}) can be viewed as an entropic proximal mapping \cite{teboulle1992entropic,censor1992proximal}.
\begin{theorem}\label{ThmPosteriorSplProxWonham}
Let $\Phi^{+}(\bp)$ be as in (\ref{PhiPlus}), and consider the proximal recursion
\begin{align}
\bp_{k}^{+}(\lambda) = \underset{\bp\in\Delta^{m-1}}{\arg\inf}\:D_{{\rm{KL}}}\left(\bp\parallel\bp_{k}^{-}\right) + \Phi^{+}(\bp), \, k\in\mathbb{N},
\label{PosteriorUpdateProx}	
\end{align}
with initial condition $\bp_{0}\in\Delta^{m-1}$. Let $\bpi^{+}(t)$ be the flow generated by (\ref{WonhamPosteriorSDE}) with $\bQ\equiv\bm{0}$ and initial condition $\bpi^{+}(0)\equiv\bp_{0}$. Then $\bp_{k}^{+}(\lambda) \rightarrow \bpi^{+}(t=k\lambda)$ as $\lambda\downarrow 0$.
\end{theorem}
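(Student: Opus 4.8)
The plan is to solve the finite-dimensional proximal subproblem in closed form and then pass to the limit $\lambda\downarrow0$, thereby recovering the explicit stochastic-integral formula of \cite[eqn. (5)]{wonham1964some}. First I would observe that, since $\bQ\equiv\bm{0}$, the state does not evolve and the prior step is the identity, so the reference point in (\ref{PosteriorUpdateProx}) is just the previous posterior, $\bp_{k}^{-}=\bp_{k-1}^{+}$. Reading the recursion with the step-size weighting of the general form (\ref{PosteriorProx}), i.e.\ minimizing $D_{{\rm{KL}}}(\bp\parallel\bp_{k}^{-})+\lambda\Phi^{+}(\bp)$ over $\Delta^{m-1}$, the objective is strictly convex: the divergence is strictly convex in $\bp$ while $\Phi^{+}$ is \emph{linear} in $\bp$, since the expectation in (\ref{PhiPlus}) equals $\sum_{i}(Y_{k-1}-h(a_{i}))^{2}p_{i}$. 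Hence a unique minimizer exists, and since $\partial_{p_{i}}[p_{i}\log(p_{i}/(\bp_{k}^{-})_{i})]\to-\infty$ as $p_{i}\downarrow0$, it lies in the relative interior. I would therefore introduce a single multiplier for the equality constraint $\bp{\mathds{1}}=1$, set the gradient of the Lagrangian to zero, and solve the log-linear stationarity condition for $p_{i}$.

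This yields the Gibbs/Boltzmann update
\begin{align}
\left(\bp_{k}^{+}\right)_{i}=\frac{\left(\bp_{k}^{-}\right)_{i}\,\exp\!\left(-\tfrac{\lambda}{2\sigma_{V}^{2}}\left(Y_{k-1}-h(a_{i})\right)^{2}\right)}{\sum_{j}\left(\bp_{k}^{-}\right)_{j}\,\exp\!\left(-\tfrac{\lambda}{2\sigma_{V}^{2}}\left(Y_{k-1}-h(a_{j})\right)^{2}\right)},
\label{PlanGibbs}
\end{align}
the nonnegativity constraints being inactive because the exponential is positive. Expanding the square, cancelling the $i$-independent factor $\exp(-\lambda Y_{k-1}^{2}/2\sigma_{V}^{2})$ between numerator and denominator, and using $\lambda Y_{k-1}=\Delta Z_{k}:=Z_{k}-Z_{k-1}$, the per-step likelihood weight reduces to $w_{i}^{(k)}=\exp\!\big(h(a_{i})\Delta Z_{k}/\sigma_{V}^{2}-\lambda\,h(a_{i})^{2}/2\sigma_{V}^{2}\big)$. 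Thus (\ref{PlanGibbs}) is precisely a discrete Bayes' rule: multiply the reference by the Gaussian likelihood of the innovation $\Delta Z_{k}$ and renormalize.

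The key structural fact is that each step is a \emph{renormalized product}, so the per-step normalizers telescope. Unrolling from $k=1$ to $k=n$ with $n\lambda=t$ gives $\left(\bp_{n}^{+}\right)_{i}=(\bp_{0})_{i}\prod_{k=1}^{n}w_{i}^{(k)}\big/\sum_{j}(\bp_{0})_{j}\prod_{k=1}^{n}w_{j}^{(k)}$, whose exponent is the sum $\sum_{k}\big(h(a_{i})\Delta Z_{k}/\sigma_{V}^{2}(t_{k-1})-\lambda h(a_{i})^{2}/2\sigma_{V}^{2}(t_{k-1})\big)$. As $\lambda\downarrow0$ the left-endpoint sums converge to the It\^{o} integral $h(a_{i})\int_{0}^{t}\sigma_{V}^{-2}\,\differential Z$ and the Riemann integral $\tfrac12 h(a_{i})^{2}\int_{0}^{t}\sigma_{V}^{-2}\,\differential s$, and continuity of $\exp$ together with the normalization deliver the Kallianpur--Striebel ratio, which is exactly \cite[eqn. (5)]{wonham1964some}. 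To close the loop I would verify by It\^{o}'s formula, using $(\differential Z)^{2}=\sigma_{V}^{2}\,\differential t$, that this ratio solves (\ref{WonhamPosteriorSDE}) with $\bQ\equiv\bm{0}$: the unnormalized weights satisfy $\differential\rho_{i}=\rho_{i}h(a_{i})\sigma_{V}^{-2}\,\differential Z$, and applying the quotient rule to $\pi_{i}^{+}=\rho_{i}/\sum_{j}\rho_{j}$ reproduces $\differential\pi_{i}^{+}=\sigma_{V}^{-2}\pi_{i}^{+}(h(a_{i})-\widehat{h})(\differential Z-\widehat{h}\,\differential t)$, i.e.\ the entries of $\sigma_{V}^{-2}\bpi^{+}(\bH-\widehat{h}\bI)(\differential Z-\widehat{h}\,\differential t)$.

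The main obstacle is making the $\lambda\downarrow0$ limit rigorous and identifying the mode of convergence in $\bp_{k}^{+}(\lambda)\to\bpi^{+}(t=k\lambda)$. Convergence of the stochastic sums to the It\^{o} integral holds in probability (equivalently in $L^{2}$) for left-endpoint Riemann--Stieltjes sums against $\differential Z$; one must then argue that the smooth map ``exponentiate and normalize,'' whose denominator is bounded away from zero uniformly on the compact simplex, transports this convergence to the ratio. A second, bookkeeping-level point is that the cross term in $\Phi^{+}$ is the \emph{only} $O(1)$ contribution to the exponent while the quadratic-in-$h$ term furnishes the $O(\lambda)$ per-step compensator; one must check these carry the correct coefficients so that the limiting drift $-\tfrac12 h(a_{i})^{2}/\sigma_{V}^{2}$ and martingale term $h(a_{i})/\sigma_{V}^{2}$ emerge consistently with (\ref{WonhamPosteriorSDE}).
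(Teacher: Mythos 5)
Your proposal is correct and follows essentially the same route as the paper's Appendix A: solve the entropic proximal step in closed form as a Gibbs/Bayes multiplicative update (the paper's (\ref{KLproxAnalytical})), use $\bp_{k}^{-}\equiv\bp_{k-1}^{+}$ to telescope the normalized products back to $\bp_{0}$, and pass the left-endpoint sums to the It\^{o} and Riemann integrals to recover Wonham's explicit formula (\ref{PosteriorFinal}). The only cosmetic differences are that you cancel the $i$-independent factor $\exp\left(-\lambda Y_{k-1}^{2}/2\sigma_{V}^{2}\right)$ directly rather than evaluating that term as $k/2$ via $\left(Z_{r}-Z_{r-1}\right)^{2}=\left(\sigma_{V}(t_{r-1})\right)^{2}\lambda$ as in (\ref{DeltaZsq}), and you verify by It\^{o}'s formula that the limiting ratio solves (\ref{WonhamPosteriorSDE}) with $\bQ\equiv\bm{0}$ where the paper simply cites \cite[Appendix 2]{wonham1964some}.
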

\begin{proof}
See Appendix A.	
\end{proof}


\subsection{The General Case}\label{SubsecGeneral}
In the following, we formally state and prove that in the general case $\bQ\neq\bm{0}$, the recursion (\ref{PosteriorUpdateProx}) still applies for the posterior computation. Compared to the proof of Theorem \ref{ThmPosteriorSplProxWonham}, the proof now will differ since the map $\bp_{k-1}^{+}\mapsto\bp_{k}^{-}$	is no longer identity, and one does not have an analytical solution for the SDE (\ref{WonhamPosteriorSDE}) for $\bQ\neq\bm{0}$, in general. 

\begin{theorem}\label{ThmPosteriorGeneralProxWonham}
Let $\Phi^{+}(\bp)$ be as in (\ref{PhiPlus}), and consider the proximal recursion
\begin{align}
\bp_{k}^{+}(\lambda) = \underset{\bp\in\Delta^{m-1}}{\arg\inf}\:D_{{\rm{KL}}}\left(\bp\parallel\bp_{k}^{-}\right) + \Phi^{+}(\bp), \, k\in\mathbb{N},
\label{PosteriorUpdateProxAgain}	
\end{align}
with initial condition $\bp_{0}\in\Delta^{m-1}$. Let $\bpi^{+}(t)$ be the flow generated by (\ref{WonhamPosteriorSDE}) with initial condition $\bpi^{+}(0)\equiv\bp_{0}$. Then $\bp_{k}^{+}(\lambda) \rightarrow \bpi^{+}(t=k\lambda)$ as $\lambda\downarrow 0$.
\end{theorem}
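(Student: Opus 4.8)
The plan is to combine the explicit solvability of the posterior proximal step with an operator-splitting (Lie--Trotter) consistency argument, reducing the general case to a perturbation of Theorem~\ref{ThmPosteriorSplProxWonham}. First I would solve the strictly convex program in (\ref{PosteriorUpdateProxAgain}) in closed form. Writing the $i$-dependent part of $\Phi^{+}$ as $\phi_{i}$, the first-order (Lagrangian) optimality condition for $D_{{\rm{KL}}}(\bp\parallel\bp_{k}^{-})+\Phi^{+}(\bp)$ over $\Delta^{m-1}$ yields the Gibbs/Bayes form $(\bp_{k}^{+})_{i}\propto(\bp_{k}^{-})_{i}\exp(-\phi_{i})$. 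After discarding the normalization-invariant term, this is exactly the Bayesian measurement update against the conditionally-Gaussian likelihood $\exp\!\big((\sigma_{V}(t_{k-1}))^{-2}(h(a_{i})(Z_{k}-Z_{k-1})-\tfrac{1}{2}h(a_{i})^{2}\lambda)\big)$, which is precisely the computation already carried out in the proof of Theorem~\ref{ThmPosteriorSplProxWonham} (Appendix~A).

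The one genuinely new ingredient relative to Theorem~\ref{ThmPosteriorSplProxWonham} is that the base point $\bp_{k}^{-}$ is no longer equal to $\bp_{k-1}^{+}$. I would therefore expand the prior map to first order in the step-size. Since $\bp_{k}^{-}$ is produced by the prior proximal recursion (\ref{PriorProx}), which is consistent with the forward Kolmogorov ODE (\ref{PriorEvolution}), one has $\bp_{k}^{-}=\bp_{k-1}^{+}+\lambda\,\bp_{k-1}^{+}\bQ+o(\lambda)$. Substituting this into the closed form above and Taylor-expanding the Gibbs weights in $\lambda$, using $Y_{k-1}=(Z_{k}-Z_{k-1})/\lambda$ together with the It\^{o} increment $Z_{k}-Z_{k-1}=h(X)\lambda+\sigma_{V}(t_{k-1})(V_{k}-V_{k-1})+o(\lambda)$ and $\mathbb{E}[(V_{k}-V_{k-1})^{2}]=\lambda$, I would show that the composite one-step increment decomposes as
\begin{align*}
\bp_{k}^{+} - \bp_{k-1}^{+} &= \lambda\,\bp_{k-1}^{+}\bQ \\
&\quad + \frac{1}{\left(\sigma_{V}(t_{k-1})\right)^{2}}\bp_{k-1}^{+}\left(\bH - \widehat{h}\,\bI\right)\left((Z_{k}-Z_{k-1}) - \widehat{h}\,\lambda\right) + o(\lambda),
\end{align*}
which is exactly the Euler--Maruyama increment of the Wonham SDE (\ref{WonhamPosteriorSDE}). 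The key point is that the prior contribution is an $O(\lambda)$ deterministic perturbation of the measurement update, whose leading martingale part is $O(\sqrt{\lambda})$; consequently the cross term between the two maps is $O(\lambda^{3/2})$ in the martingale direction and $O(\lambda^{2})$ in the drift direction, so the splitting error is of strictly higher order than $\lambda$ and does not affect the limit.

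With one-step consistency in hand, convergence $\bp_{k}^{+}(\lambda)\rightarrow\bpi^{+}(t=k\lambda)$ as $\lambda\downarrow 0$ would follow from a standard stability-plus-consistency (discrete Gr\"{o}nwall) argument for the Euler--Maruyama scheme applied to (\ref{WonhamPosteriorSDE}): because $h$ is bounded on the finite set $\Omega$, $\widehat{h}$ is a smooth bounded function of $\bp$, and $\sigma_{V}$ is bounded away from zero, the SDE coefficients are Lipschitz on the compact simplex $\Delta^{m-1}$, guaranteeing well-posedness and convergence of the scheme. The main obstacle I anticipate lies precisely in handling the composition of the two proximal maps. Unlike the $\bQ\equiv\bm{0}$ case, the iterates can no longer be telescoped into a single explicit stochastic-integral (Kallianpur--Striebel) formula, so the argument must proceed at the level of local increments, and one must verify that the accumulated per-step splitting error over $k=t/\lambda$ steps remains $o(1)$ as $\lambda\downarrow 0$ rather than relying on an exact global formula as in Theorem~\ref{ThmPosteriorSplProxWonham}.
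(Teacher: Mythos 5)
Your proposal follows essentially the same route as the paper's Appendix~B proof: solve the entropic proximal step in closed form to obtain the Gibbs/Bayes update $(\bp_{k}^{+})_{i}\propto(\bp_{k}^{-})_{i}\exp(-\bc_{k-1}(i))$, substitute the first-order Euler expansion $\bp_{k}^{-}=\bp_{k-1}^{+}\left(\bm{I}+\lambda\bQ\right)+O(\lambda^{2})$ of the prior map, Taylor-expand the exponential weights using the It\^{o}-calculus rules $(\Delta Z_{k-1})^{2}=\lambda\left(\sigma_{V}(t_{k-1})\right)^{2}$ and $\lambda\Delta Z_{k-1}=0$, and identify the resulting one-step increment with the Euler--Maruyama discretization of (\ref{WonhamPosteriorSDE}). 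Your closing stability-plus-Gr\"{o}nwall argument is actually more explicit than the paper, which stops at one-step consistency and simply passes to the $\lambda\downarrow 0$ limit.
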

\begin{proof}
See Appendix B.	
\end{proof}


\section{Proximal Recursion for the Prior}\label{SecProxPrior}
We now derive a proximal recursion of the form (\ref{PriorProx}) for the prior update. We assume that the Markov chain $X(t)$ is irreducible. Then, $X(t)$ has a unique stationary distribution vector $\bpi_\infty$ which is the limit $\lim_{t\to \infty}\bpi^-(t)$ and is elementwise positive.
We further assume that the Markov chain is
reversible, i.e., that the detailed balance condition
\begin{equation}\label{eq:detail}
(\bpi_\infty)_i \bm{Q}(i,j) = (\bpi_\infty)_j \bm{Q}(j,i)
\end{equation}
holds. Here $(\cdot)_i$ denotes the $i$th entry of.

Starting from a known probability distribution 
\[
\bpi^-(t_{k-1})=\bp_{k-1}^+(\lambda), \mbox{ for }t_{k-1}=(k-1)\lambda,
\]
the evolution of the prior $\bpi^-(t)$ of $X(t)$ is governed
by the prior dynamics \eqref{PriorEvolution}.
Thus,
\[
\bpi^- (\underbrace{t_{k-1}+\lambda}_{t_k =k\lambda})=\bp_{k-1}^+(\lambda) \exp\left(\lambda \bm{Q}\right),
\]
equivalently, $\bp_{k-1}^+(\lambda)=\bpi^-(t_k)\exp\left(-\lambda \bm{Q}\right)$.
Hence,
\begin{equation}\label{eq:bpkmlambda}
\bp_{k-1}^+(\lambda)=\bpi^-(t_k)(\bm{I}-\lambda \bQ) + o(\lambda),
\end{equation}
where $o(\lambda)$ signifies the ``order of''.

As a consequence of \eqref{eq:detail}, the transition rate matrix $\bm{Q}$ defines a symmetric operator when considered with respect to the inner product
\begin{align}
\langle \bp,\bq\rangle_{\bpi_\infty}:= \sum_i\frac{ (\bp)_i (\bq)_i}{(\bpi_\infty)_i}. 
\label{DefInnerProduct}
\end{align}
Indeed, if $\bm{D}_{\bpi_\infty}$ denotes the diagonal matrix formed with the entries of $\bpi_\infty$, then in matrix notation, \eqref{eq:detail} becomes
\begin{equation}\label{eq:T}
\bm{D}_{\bpi_\infty}\bQ=\bm{Q}^T \bm{D}_{\bpi_\infty},
\end{equation}
and therefore
$
\langle \bp \bQ,\bq\rangle_{\bpi_\infty}
=\langle \bp,\bq \bQ\rangle_{\bpi_\infty},$
where $^T$ denoting matrix/vector transposition. 
We can now express $\bp_k^-(\lambda)$ as a solution to a proximal
recursion. Define the quadratic form
\begin{equation}
\Phi^-(\bp)= - \frac12\langle \bp \bQ,\bp\rangle_{\bpi_\infty},
\label{Phiminus}
\end{equation}
and denote
\begin{align}
\|\bp\|_{\bpi_\infty}^2=\langle \bp,\bp\rangle_{\bpi_\infty}.
\label{DefNorm}
\end{align}

\begin{theorem} \label{ThmPrior}
Let $\Phi^{-}(\bp)$ be as in (\ref{Phiminus}). The $\lambda$-approximate prior satisfies the following proximal recursion
\begin{equation}\label{eq:quadratic}
\bp_k^-(\lambda) = \underset{\bp\in\Delta^{m-1}}{\arg\inf}\:  \frac12\|\bp-\bp_{k-1}^+\|_{\bpi_\infty}^2+\lambda\Phi^-(\bp).
\end{equation}
\end{theorem}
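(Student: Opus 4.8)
The plan is to show that the unique minimizer of the strictly convex objective in \eqref{eq:quadratic} satisfies exactly the relation $\bp(\bm{I}-\lambda\bQ)=\bp_{k-1}^{+}$, and then to compare this with \eqref{eq:bpkmlambda} to identify the minimizer with the $\lambda$-approximate prior. First I would record that the objective is strictly convex: the term $\frac12\|\bp-\bp_{k-1}^{+}\|_{\bpi_\infty}^{2}$ has Hessian $\bm{D}_{\bpi_\infty}^{-1}\succ 0$, while $\Phi^{-}$ in \eqref{Phiminus} is itself a convex (indeed nonnegative) quadratic form. To see the latter, substitute $v_{i}=(\bp)_{i}/(\bpi_\infty)_{i}$ and use detailed balance \eqref{eq:detail} to rewrite $-\langle\bp\bQ,\bp\rangle_{\bpi_\infty}$ as the Dirichlet form $\tfrac12\sum_{i,j}(\bpi_\infty)_{i}\bm{Q}(i,j)(v_{i}-v_{j})^{2}\ge 0$. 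Strict convexity over the compact set $\Delta^{m-1}$ guarantees a unique minimizer, so it suffices to verify the stationarity conditions.

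Next I would form the Lagrangian for the equality constraint $\bp{\mathds{1}}=1$, deferring the inequalities $\bp\ge 0$, which are inactive for $\lambda$ small since the minimizer lies near the (elementwise positive) $\bp_{k-1}^{+}$. Taking the Euclidean gradient, the quadratic term contributes $\bm{D}_{\bpi_\infty}^{-1}(\bp-\bp_{k-1}^{+})^{T}$; the key computation is the gradient of $\lambda\Phi^{-}$. Symmetrizing the form $\bp\,\bQ\bm{D}_{\bpi_\infty}^{-1}\bp^{T}$ produces $\bQ\bm{D}_{\bpi_\infty}^{-1}+\bm{D}_{\bpi_\infty}^{-1}\bQ^{T}$, and here I would invoke the matrix form \eqref{eq:T} of detailed balance, i.e. $\bm{D}_{\bpi_\infty}^{-1}\bQ^{T}=\bQ\bm{D}_{\bpi_\infty}^{-1}$, to collapse this to $2\bQ\bm{D}_{\bpi_\infty}^{-1}$, giving gradient $-\lambda\bQ\bm{D}_{\bpi_\infty}^{-1}\bp^{T}$. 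Left-multiplying the stationarity condition by $\bm{D}_{\bpi_\infty}$ and using \eqref{eq:T} once more (so that $\bm{D}_{\bpi_\infty}\bQ\bm{D}_{\bpi_\infty}^{-1}=\bQ^{T}$), then transposing, yields $\bp(\bm{I}-\lambda\bQ)=\bp_{k-1}^{+}-\mu\,\bpi_\infty$, where $\mu$ is the multiplier.

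To pin down $\mu$, I would right-multiply this identity by ${\mathds{1}}$. Since $\bQ$ is a rate matrix, $\bQ{\mathds{1}}=\bm{0}$, so $(\bm{I}-\lambda\bQ){\mathds{1}}={\mathds{1}}$ and the left side equals $\bp{\mathds{1}}=1$; on the right, both $\bp_{k-1}^{+}$ and $\bpi_\infty$ lie on the simplex, giving $1=1-\mu$ and hence $\mu=0$. This leaves the exact characterization $\bp(\bm{I}-\lambda\bQ)=\bp_{k-1}^{+}$ of the minimizer. Comparing with \eqref{eq:bpkmlambda}, namely $\bp_{k-1}^{+}=\bpi^{-}(t_{k})(\bm{I}-\lambda\bQ)+o(\lambda)$, shows that the minimizer coincides with $\bpi^{-}(t_{k})$ to first order in $\lambda$, which is precisely the $\lambda$-approximate prior $\bp_{k}^{-}(\lambda)$, completing the argument.

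I anticipate the main obstacle to be the careful bookkeeping in the two distinct uses of detailed balance — once to exhibit $\Phi^{-}$ as a positive semidefinite (Dirichlet) form, securing uniqueness of the minimizer, and once to symmetrize the gradient of $\Phi^{-}$ — together with a clean justification that the positivity constraints remain inactive, so that the equality-constrained stationarity analysis captures the full solution.
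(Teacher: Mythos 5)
Your proposal is correct and takes essentially the same route as the paper's proof: you derive the stationarity condition, use the detailed-balance identity \eqref{eq:T} to collapse the symmetrized gradient of $\Phi^-$ to $-\lambda\bQ\bm{D}_{\bpi_\infty}^{-1}\bp^{T}$, arrive at $\bp(\bm{I}-\lambda\bQ)=\bp_{k-1}^{+}$, and compare with \eqref{eq:bpkmlambda}. Your two additions --- establishing convexity of $\Phi^-$ via the Dirichlet-form rewriting, and carrying an explicit multiplier $\mu$ for $\bp{\mathds{1}}=1$ that you show vanishes (the paper instead omits the multiplier and verifies a posteriori, via $\bQ{\mathds{1}}=\bm{0}$, that the unconstrained stationary point lies on the simplex) --- are refinements of the same argument rather than a different approach.
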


\begin{proof}
The stationarity condition for \eqref{eq:quadratic} becomes
\begin{align}
\bm{0}&=\frac{\partial}{\partial \bp}\left(\frac12\|\bp-\bp_{k-1}^+\|_{\bpi_\infty}^2+\lambda\Phi^-(\bp)\right)\nonumber\\
& = (\bp-\bp_{k-1}^+)\bm{D}_{\bpi_\infty}^{-1} -\lambda \bp \frac12 \left(\bQ\bm{D}_{\bpi_\infty}^{-1} + \bm{D}_{\bpi_\infty}^{-1}\bQ^T\right)\nonumber\\
& = (\bp-\bp_{k-1}^+)\bm{D}_{\bpi_\infty}^{-1} -\lambda \bp \bQ\bm{D}_{\bpi_\infty}^{-1},\label{laststepPriorProof}
\end{align}
since $\bm{D}_{\bpi_\infty}^{-1}\bQ^T= \bQ\bm{D}_{\bpi_\infty}^{-1}$ from \eqref{eq:T}. Here, by $\bm{0}$ we denote the zero vector of compatible dimensions.
Thus,
\[
\bp_{k-1}^+(\lambda)=\bp(\bm{I}-\lambda\bQ).
\]
For sufficiently small $\lambda$, the matrix $\bm{I}-\lambda \bQ$ is invertible and the unique minimizer $\bp$ has positive entries.
Moreover, since $\bQ{\mathds 1}=\bm{0}$, $\bp \mathds{1}=1$ and hence $\bp\in\Delta^{m-1}$, thus, we set
\[
\bp_k^-(\lambda) = \bp_{k-1}^+(\lambda)(\bm{I}-\lambda\bQ)^{-1}.
\]
Comparing with \eqref{eq:bpkmlambda} we see that
$
\bp_k^-(\lambda) = \bpi^-(k\lambda) + o(\lambda),
$
which is our desired result.
\end{proof}

\begin{remark}
It is possible to use other (weighted) inner products instead of our specific choice (\ref{DefInnerProduct}). For example, \cite[Ch. 6.1.2]{stroock2013introduction} motivates the choice $\langle \bp,\bq\rangle_{\bpi_\infty}:= \sum_{i}(\bp)_{i}(\bq)_{i}(\bpi_\infty)_{i}$, using which in (\ref{Phiminus}) and (\ref{DefNorm}), we again arrive at the statement in Theorem \ref{ThmPrior}. The proof steps are as before, except now the common post-multiplication matrix term in (\ref{laststepPriorProof}) becomes $\bm{D}_{\bpi_\infty}$ instead of $\bm{D}_{\bpi_\infty}^{-1}$.  
\end{remark}

\begin{figure}[htpb]
\centering
\begin{subfigure}{.49\textwidth}
  \centering
  \includegraphics[width=0.84\linewidth]{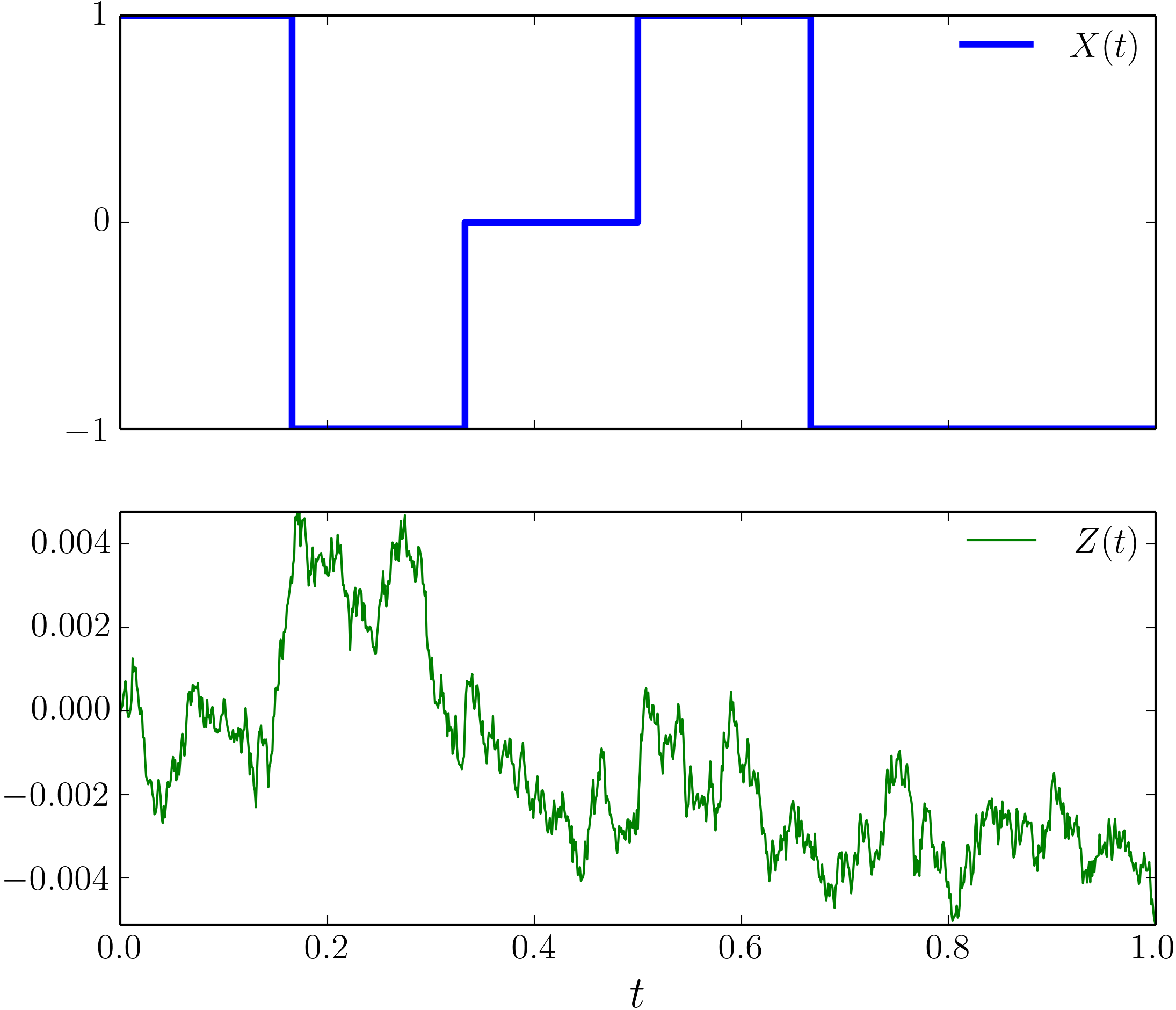}
  \caption{\small{A sample path of the state $X(t)$ (\emph{top}) and of the observation process $Z(t)$ (\emph{bottom}) shown for Example 1 in Section \ref{SecNumericalExample}.}}
  \label{RevXZ}
\end{subfigure}
\par\bigskip
\begin{subfigure}{.49\textwidth}
  \centering
  \includegraphics[width=0.84\linewidth]{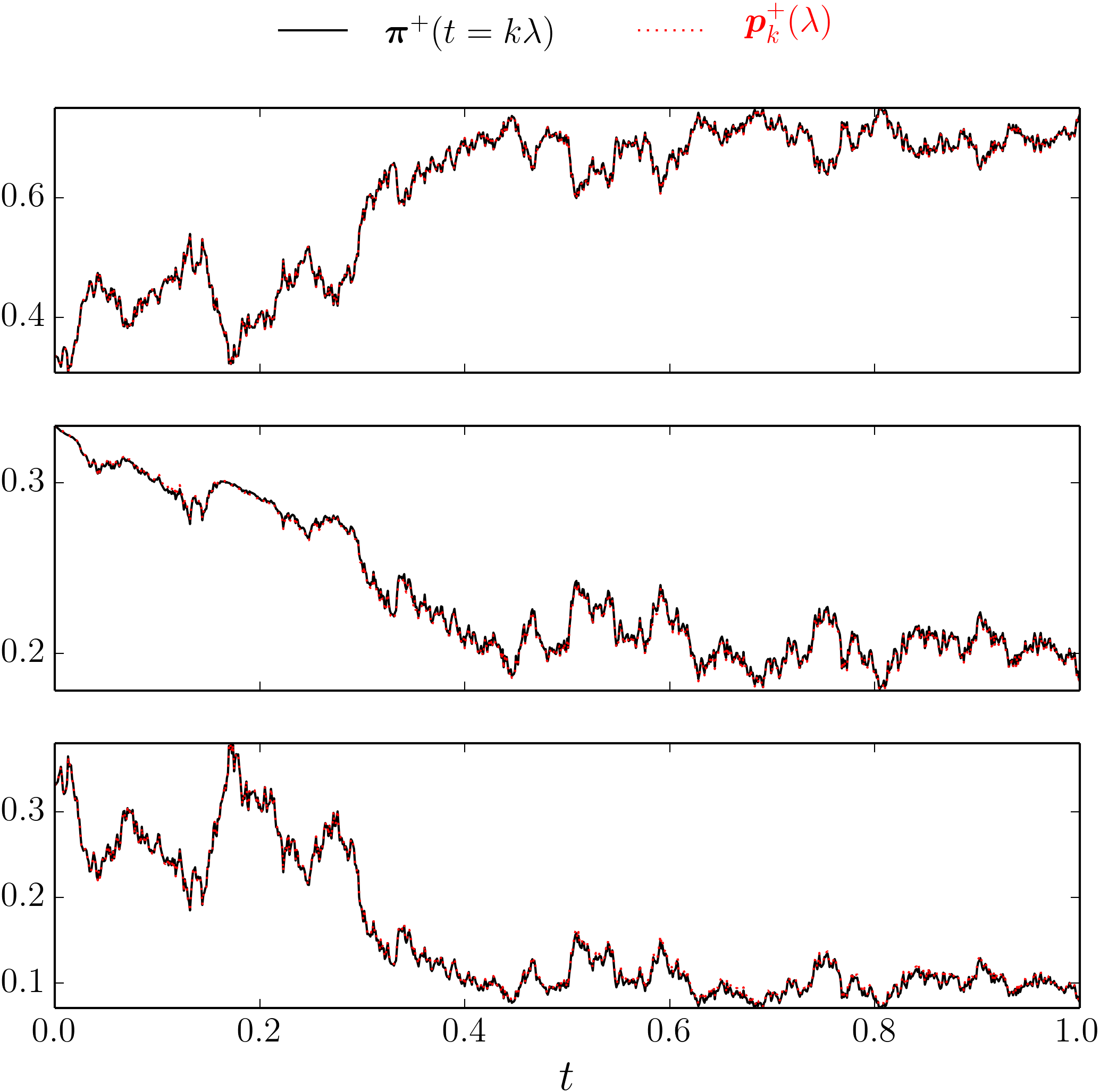}
  \caption{\small{Starting from the initial occupation probability vector $(1/3,1/3,1/3)$, shown above are sample paths for the first (\emph{in top}), the second (\emph{in middle}), and the third (\emph{in bottom}) component of the true (\emph{black, solid}) and approximate (\emph{red, dashed}) posterior probability vectors for Example 1 in Section \ref{SecNumericalExample}.}}
  \label{RevPosterior}
\end{subfigure}
\caption{Simulation results for Example 1 in Section \ref{SecNumericalExample}.}
\label{RevExample}
\vspace*{-0.2in}
\end{figure}  
\begin{figure}[htpb]
\centering
\begin{subfigure}{.49\textwidth}
  \centering
  \includegraphics[width=0.84\linewidth]{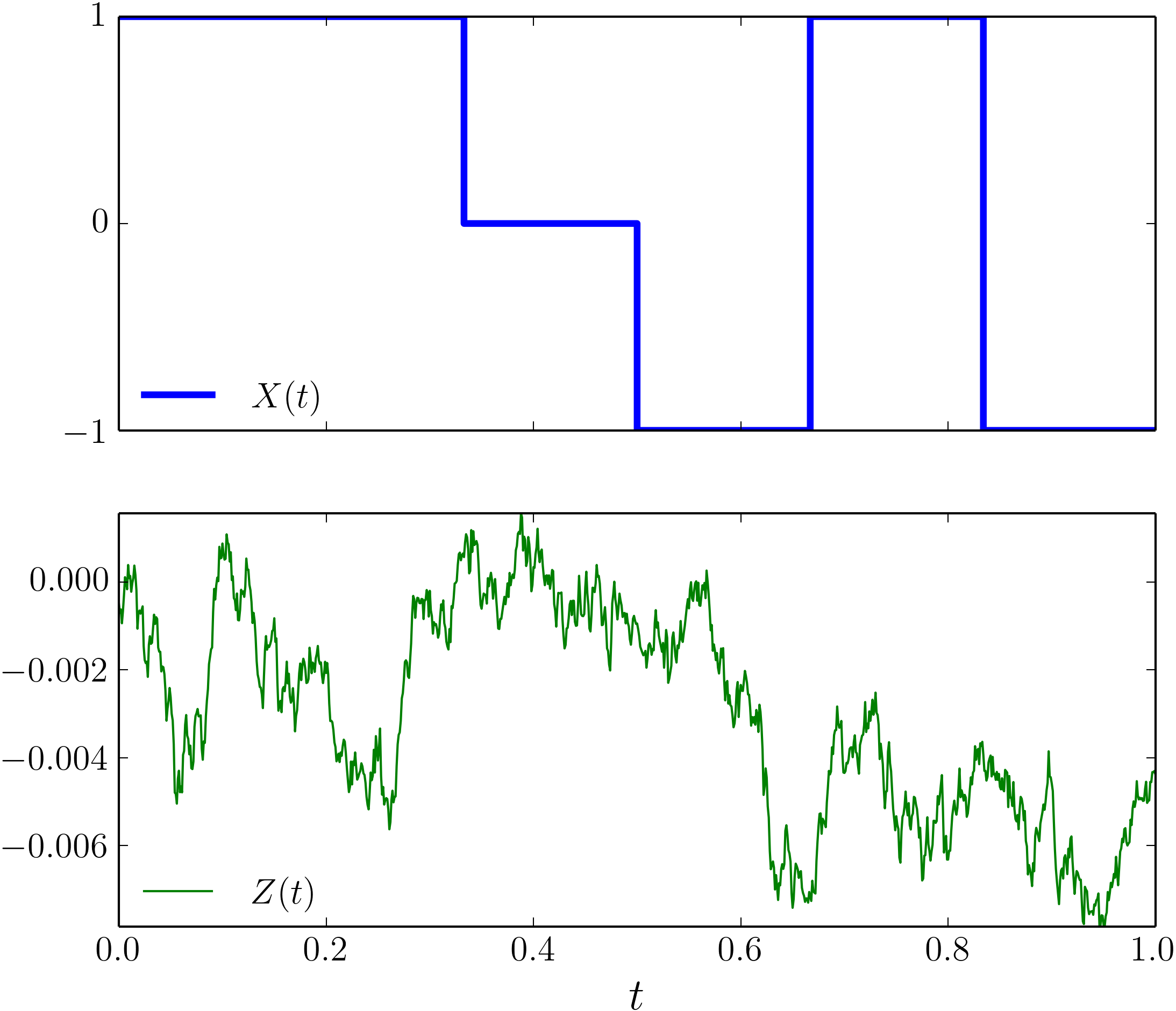}
  \caption{\small{A sample path of the state $X(t)$ (\emph{top}) and of the observation process $Z(t)$ (\emph{bottom}) shown for Example 2 in Section \ref{SecNumericalExample}.}}
  \label{NonrevXZ}
\end{subfigure}
\par\bigskip
\begin{subfigure}{.49\textwidth}
  \centering
  \includegraphics[width=0.84\linewidth]{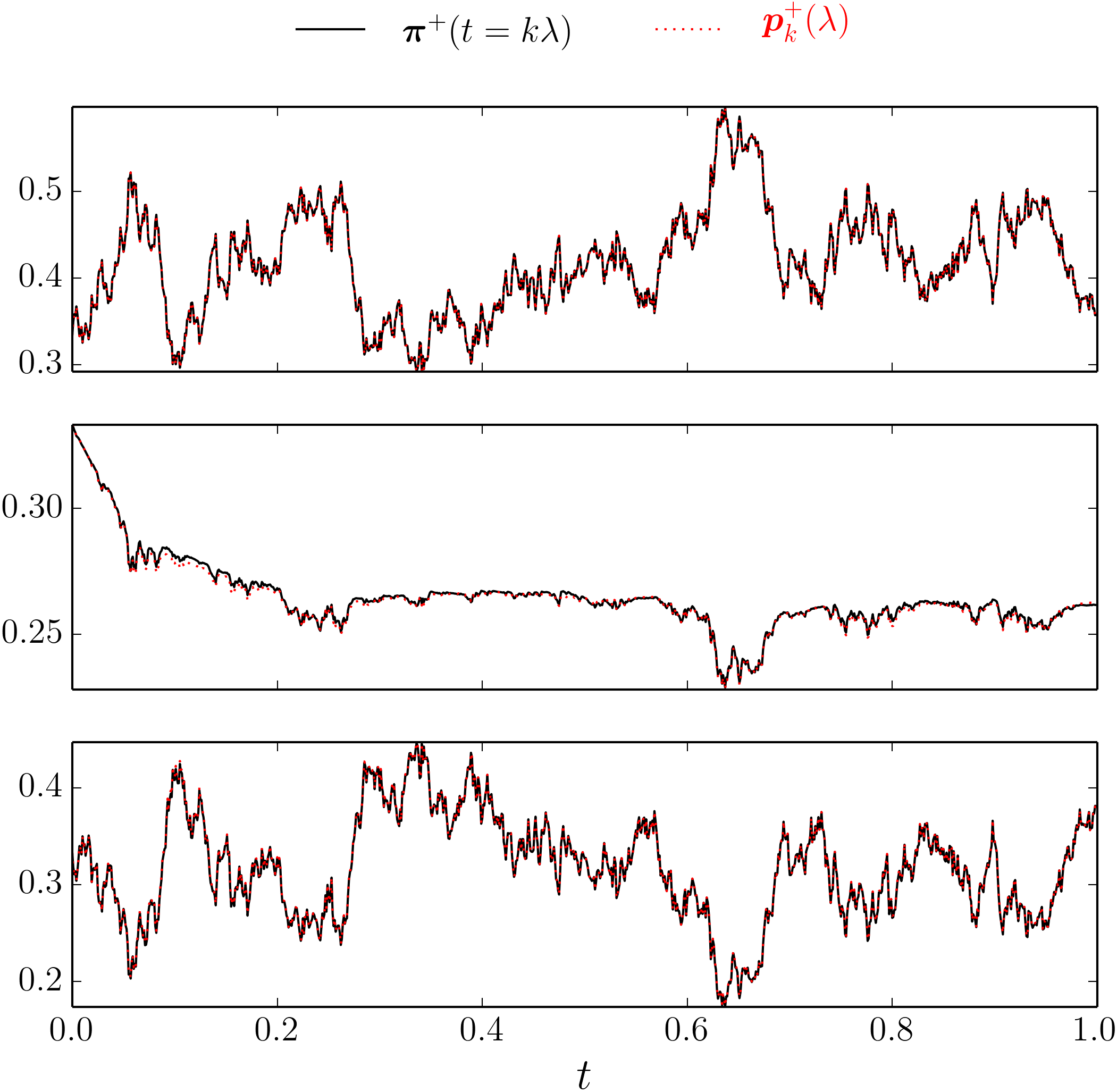}
  \caption{\small{Starting from the initial occupation probability vector $(1/3,1/3,1/3)$, shown above are sample paths for the first (\emph{in top}), the second (\emph{in middle}), and the third (\emph{in bottom}) component of the true (\emph{black, solid}) and approximate (\emph{red, dashed}) posterior probability vectors for Example 2 in Section \ref{SecNumericalExample}.}}
  \label{NonrevPosterior}
\end{subfigure}
\caption{Simulation results for Example 2 in Section \ref{SecNumericalExample}.}
\label{NonrevExample}
\vspace*{-0.2in}
\end{figure}  

\section{Numerical Examples}\label{SecNumericalExample}
The examples below concern estimating the state of a 3-state continuous time Markov chain, the first one being reversible while the second is not.\\[.1in]
\noindent{\em A. Example 1}\\
We consider estimating the state of an irreducible Markov chain $X(t)$ taking values in $\{-1,0,1\}$ with rate matrix
\begin{align}
\bQ = \begin{bmatrix}
-1 & 1/2 & 1/2\\
2 & -2 & 0\\
3 & 0 & -3
 \end{bmatrix}.
\label{RevQ}	
\end{align}
It is easy to verify that the stationary distribution $\bm{\pi}_{\infty}=\left(12/17, 3/17, 2/17\right)$, and that the reversibility condition \eqref{eq:detail} holds. In (\ref{ObsModel}), we set $h(X(t))=0.01 X(t)$, and $\sigma_{V}=0.01$. Fig. \ref{RevXZ} shows a sample paths for $X(t)$ and $Z(t)$. In Fig. \ref{RevPosterior}, we compare the time evolution of the components of the associated posterior probability vector $\bm{\pi}^{+}(t)$ from the Wonham filter (\emph{in black, solid}), with the same of the approximator $\bm{p}^{+}_{k}(\lambda)$ (\emph{in red, dashed}) computed via the proposed proximal recursion framework (Fig. \ref{BlockDiagm}) with step-size $\lambda=10^{-3}$ and $t\in[0,1]$. We only show here the result for a fixed initial condition $\bm{\pi}_{0}=(1/3,1/3,1/3)$; the trends are similar for other initial conditions. Computing $\bm{\pi}^{+}(t)$ in the Wonham filter entails numerically solving the system of coupled nonlinear SDEs (\ref{WonhamPosteriorSDE}), done here via the Euler-Maruyama method. In contrast, computing $\bm{p}^{+}_{k}(\lambda)$ entails recursive evaluation of (\ref{EulerPrior}) and (\ref{KLproxAnalytical}). In our numerical experiments, the latter was observed to enjoy about an order of magnitude computational speed up. Fig. \ref{RevPosterior} shows that the respective posterior sample paths match, as predicted.\\[.1in]
\noindent{\em B. Example 2}\\
Next we consider a Markov chain $X(t)$ taking values in $\{-1,0,1\}$ with rate matrix
\begin{align}
\bQ = \begin{bmatrix}
-5 & 3 & 2\\
4 & -10 & 6\\
3 & 4 & -7 	
 \end{bmatrix},
\label{NonrevQ}	
\end{align}
and $h(\cdot)$, $\sigma_{V}$, $\lambda$ as before. Here again, for $t\in[0,1]$, we compare the posterior sample paths computed from the Wonham filter (\ref{WonhamPosteriorSDE}) with its approximator computed via the proposed proximal recursion approach. Notice that for prior computation, although one does not have a metric version of the variational formula (\ref{PriorProx}), the approximation (\ref{EulerPrior}) still applies for small $\lambda$. Thus, $\bm{p}_{k}^{+}(\lambda)$ can still be computed by recursive evaluation of (\ref{EulerPrior}) and (\ref{KLproxAnalytical}).   

Fig. \ref{NonrevXZ} shows a sample path for $X(t)$, and the same for $Z(t)$. In Fig. \ref{NonrevPosterior}, we show the corresponding $\bm{\pi}^{+}(t)$ from the Wonham filter (\emph{in black, solid}), and $\bm{p}^{+}_{k}(\lambda)$ (\emph{in red, dashed}) from the proximal recursion for this case, starting from the initial condition $\bm{\pi}_{0}=(1/3,1/3,1/3)$. The respective sample paths are in agreement, as expected.


\section{Conclusions}\label{SecConclusion}
This purpose of this paper is to expand on the list of examples where the governing equations for state estimation, conditioned on the history of noisy measurements, can be expressed as gradient flow on the space of probability density functions with respect to a suitable metric. This viewpoint promises a new class of estimation algorithms, taking advantage of implementation of flows via recursive application of proximal projections. Prior work elucidated the case of the Kalman-Bucy filter, and therefore, in the present work we sought to extend the paradigm to case of the Wonham filter. The latter estimates the state of a continuous time Markov chain on a finite state space based on noisy observations. In this paper we have established that the posterior flow that is provided by the Wonham filter can be expressed as the small time-step limit of proximal recursions of certain functionals on the probability simplex. Our preliminary numerical experiments reported here hint at possible computational advantages of the proposed approach, especially for large Markov chains. This will be systematically investigated in our future work.




\section*{Appendix}

\subsection{Proof of Theorem \ref{ThmPosteriorSplProxWonham}}
\begin{proof}[\unskip\nopunct]
With the stated choices for the pair $(d^{+},\Phi^{+})$, we are led to the proximal map of the form
\begin{align}
\underset{\bp\in\Delta^{m-1}}{\arg\inf}\:D_{{\rm{KL}}}\left(\bp\parallel\bp_{k}^{-}\right) + \langle\bc_{k-1},\bp\rangle,\quad k\in\mathbb{N},	
\label{EntropicProx}	
\end{align}
where the $i$-th component of the row vector $\bc_{k-1}$ is
\begin{align}
\bc_{k-1}(i) := \frac{\lambda}{2\left(\sigma_{V}(t_{k-1})\right)^{2}}\left(Y_{k-1} - h(a_{i})\right)^{2}, \, i=1,\hdots,m.
\label{Defc}	
\end{align}
The objective in (\ref{EntropicProx}) is strictly convex since $D_{{\rm{KL}}}$ is strictly convex in $\bp$, and the other summand is linear in $\bp$. Therefore, the $\arg\inf$ in (\ref{EntropicProx}), which we denote by $\bp_{k}^{+}$, is unique. By direct calculation (setting the gradient of Lagrangian w.r.t. $\bp$ to zero, and enforcing the constraint $\bp{\mathds{1}}=1$), we get
\begin{align}
\bp_{k}^{+} = \bp_{k}^{-}\odot\exp\left(-\bc_{k-1}\right)/\left(\left(\bp_{k}^{-}\odot\exp(-\bc_{k-1})\right){\mathds{1}}\right).
\label{KLproxAnalytical}	
\end{align}

Since in this case, we have no prior dynamics, therefore $\bp_{k}^{-} \equiv \bp_{k-1}^{+}$ for all $k\in\mathbb{N}$. Hence, we can rewrite (\ref{KLproxAnalytical}) in terms of the prior probability distribution $\bp_{0}$ as  
\begin{align}
\bp_{k}^{+} = \bp_{0}\odot\exp(-\bgamma_{k-1})/\left(\left(\bp_{0}\odot\exp(-\bgamma_{k-1})\right){\mathds{1}}\right), 
\label{PosteriorGenerealForm}	
\end{align}
where the $1\times m$ vector
\begin{align}
\bgamma_{k-1} := \sum_{r=1}^{k} \bc_{r-1}.
\label{Defgamma}	
\end{align}
Noting that (\ref{PosteriorGenerealForm}) is simply normalization (i.e., Kullback-Leibler projection onto probability simplex) of the vector $\bp_{0}\odot\exp(-\bgamma_{k-1})$, we now unpack $\bgamma_{k-1}$ as function of $\lambda$ and the sampled process $\{Z_{k-1}\}_{k\in\mathbb{N}}$. 

Because $h(\cdot)$ is injective, the random variable $\widetilde{X}:=h(X)$ takes values in $\{\widetilde{a}_{1}, \hdots, \widetilde{a}_{m}\}$, and $\widetilde{X}=\widetilde{a}_{i}:=h(a_{i})$ with probability $\bp_{0}(i)$. Combining (\ref{Defc}) and (\ref{Defgamma}), we then have
\begin{align}
\bgamma_{k-1}(i) = \frac{\lambda}{2}\displaystyle\sum_{r=1}^{k}\frac{\left(Y_{r-1} - \widetilde{a}_{i}\right)^{2}}{\left(\sigma_{V}(t_{r-1})\right)^{2}}, \quad i=1,\hdots,m.
\label{gammacomp}	
\end{align}
Expanding the square in the numerator of each summand in (\ref{gammacomp}), substituting $Y_{r-1} = (Z_{r} - Z_{r-1})/\lambda$ for $r=1,\hdots,k$, and rearranging yields
\begin{align}
\bgamma_{k-1}(i) = &\underbrace{\frac{1}{2}\displaystyle\sum_{r=1}^{k}\frac{\left(Z_{r} - Z_{r-1}\right)^{2}}{\lambda\left(\sigma_{V}(t_{r-1})\right)^{2}}}_{\text{term 1}} - \underbrace{\widetilde{a}_{i}\displaystyle\sum_{r=1}^{k}\frac{Z_{r} - Z_{r-1}}{\left(\sigma_{V}(t_{r-1})\right)^{2}}}_{\text{term 2}}\nonumber\\
&+ \underbrace{\frac{1}{2}\widetilde{a}_{i}^{2}\displaystyle\sum_{r=1}^{k}\frac{\lambda}{\left(\sigma_{V}(t_{r-1})\right)^{2}}}_{\text{term 3}}.
\label{gammacompsimplified}	
\end{align}

To simplify the term 1 indicated in (\ref{gammacompsimplified}), we use the Euler-Maruyama update for (\ref{Observn}) given by
\begin{align}
Z_{r} = Z_{r-1} + h(a_{i})\lambda + \sigma_{V}(t_{r-1})\left(V_{r} - V_{r-1}\right) + O(\lambda^{2}), 
\label{EMupdateforObservn}	
\end{align}
where $r=1,\hdots,k$, and the increments $(V_{r} - V_{r-1})$ are i.i.d. zero mean normal random variables with variance $\lambda$. From (\ref{EMupdateforObservn}), we get
\begin{align}
\left(Z_{r} - Z_{r-1}\right)^{2} = \left(\sigma_{V}(t_{r-1})\right)^{2}\lambda,  
\label{DeltaZsq}	
\end{align}
where we used\footnote{These are equivalent to the well-known relations in stochastic calculus: $(\differential t)^{2}=0$, $(\differential V)^{2} = \differential t$, and $\differential V\differential t = 0$.} $\lambda^{2}=0$, $(V_{r} - V_{r-1})^{2}=\lambda$, and $(V_{r} - V_{r-1})\lambda = 0$. Consequently, term 1 in (\ref{gammacompsimplified}) equals $k/2$.

Combining (\ref{PosteriorGenerealForm}), (\ref{gammacompsimplified}), and (\ref{DeltaZsq}), we thus obtain
\begin{align}
\bp_{k}^{+}(i) = \displaystyle\frac{\bp_{0}(i)\exp(-k/2 + \text{term 2} - \text{term 3})}{\displaystyle\sum_{i=1}^{m}\bp_{0}(i)\exp(-k/2 + \text{term 2} - \text{term 3})}.
\label{PosteriorComponentForm}	
\end{align}
Passing (\ref{PosteriorComponentForm}) to the limit $\lambda\downarrow 0$, recalling that $\widetilde{a}_{i} = h(a_{i})$, and that the time interval $[0,t]$ was divided into sub-intervals with breakpoints $t_{0}\equiv 0, t_{1}, \hdots, t_{k}\equiv t$, we arrive at
\begin{align}
&\displaystyle\lim_{\lambda\downarrow 0} \bp_{k}^{+}(i) = \nonumber\\
&\displaystyle\frac{\bp_{0}(i)\exp\!\left(\!h(a_{i})\int_{0}^{t}\frac{\differential Z(s)}{\left(\sigma_{V}(s)\right)^{2}} - \frac{1}{2}(h(a_{i}))^{2}\int_{0}^{t}\frac{\differential s}{\left(\sigma_{V}(s)\right)^{2}}\!\right)}{\sum_{i=1}^{m}\bp_{0}(i)\exp\!\left(\!h(a_{i})\int_{0}^{t}\frac{\differential Z(s)}{\left(\sigma_{V}(s)\right)^{2}} - \frac{1}{2}(h(a_{i}))^{2}\int_{0}^{t}\frac{\differential s}{\left(\sigma_{V}(s)\right)^{2}}\!\right)}.
\label{PosteriorFinal}	
\end{align}  
The right-hand-side of (\ref{PosteriorFinal}) is exactly the solution of the SDE (\ref{WonhamPosteriorSDE}) for $\bQ\equiv\bm{0}$ with initial condition $\bpi^{+}(0)=\bp_{0}$; see \cite[Appendix 2]{wonham1964some} for a proof. Therefore, we conclude that $\displaystyle\lim_{\lambda\downarrow 0} \bp_{k}^{+} = \bpi^{+}(t=k\lambda)$, as desired.
\end{proof}


\subsection{Proof of Theorem \ref{ThmPosteriorGeneralProxWonham}}
\begin{proof}[\unskip\nopunct]
We start by noting that the development in Appendix A up to expression (\ref{KLproxAnalytical}) still applies. Also, since $h(\cdot)$ is injective, the process $h(X(t))\sim\text{Markov}(\bQ)$ takes values in $\{h(a_{1}),\hdots,h(a_{m})\}$.

For $X(t)\sim\text{Markov}(\bQ)$, the map $\bp_{k-1}^{+}\mapsto\bp_{k}^{-}$, $k\in\mathbb{N}$, corresponding to the Euler discretization of (\ref{PriorEvolution}) is
\begin{align}
\bp_{k}^{-} = \bp_{k-1}^{+}\left(\bm{I} + \lambda\bQ\right) + O(\lambda^{2}).
\label{EulerPrior}	
\end{align}
Let $\Delta Z_{k-1}:=Z_{k} - Z_{k-1}$. Substituting $Y_{k-1}=\Delta Z_{k-1}/\lambda$ in (\ref{Defc}), expanding the square, and using (\ref{DeltaZsq}), we have
\begin{align}
\exp(-\bc_{k-1}(i)) = &\exp(-1/2)\times\exp\left(\!\frac{h(a_{i})\Delta Z_{k-1}}{\left(\sigma_{V}(t_{k-1})\right)^{2}}\!\right)\times\nonumber\\
&\exp\left(-\frac{\lambda (h(a_{i})^{2}}{2\left(\sigma_{V}(t_{k-1})\right)^{2}}\right),
\label{expminusc}	
\end{align}
for $i=1,\hdots,m$. 

Up to first order, the second exponential factor in (\ref{expminusc}) approximates as $1 + h(a_{i})\Delta Z_{k-1}/\left(\sigma_{V}(t_{k-1})\right)^{2}$. For the third exponential factor in (\ref{expminusc}), notice that since $\lambda\equiv\differential t$ for $\lambda\downarrow 0$, therefore from (\ref{ObsModel}), we have $h^{2}(a_{i}) = (\differential Z - \sigma_{V}\differential V)^{2}/\lambda^{2} = 0$, wherein we used $(\differential Z)^{2} = \sigma_{V}^{2}\lambda$, $(\differential V)^{2}=\lambda$, and $\lambda\differential V = 0$. Putting these together, (\ref{expminusc}) yields
\begin{align}
\exp(-\bc_{k-1}(i)) \approx \exp(-1/2) \left(1 + \frac{h(a_{i})\Delta Z_{k-1}}{\left(\sigma_{V}(t_{k-1})\right)^{2}}\right).
\label{approxExpMinusc}	
\end{align}

Substituting for $\bp_{k}^{-}$ and $\exp(-\bc_{k-1})$ in (\ref{KLproxAnalytical}) from (\ref{EulerPrior}) and (\ref{approxExpMinusc}), respectively, we get
\begin{align}
\bp_{k}^{+}(i) = \nu(i)/\delta, \quad i=1,\hdots,m,
\label{Intermedpkplus}	
\end{align}
\vspace*{-0.2in}
\begin{subequations}
\begin{align}
\nu(i) \!&:= \!\!\left(\!\bp_{k-1}^{+}(i) + \lambda\!\displaystyle\sum_{j=1}^{m}\!\bp_{k-1}^{+}(j)\bQ(i,j)\!\right)\!\!\left(\!1 + \frac{h(a_{i})\Delta Z_{k-1}}{\left(\sigma_{V}(t_{k-1})\right)^{2}}\!\right),\label{num}\\
\delta &:= \displaystyle\sum_{i=1}^{m}\nu(i).
\label{deno}
\end{align}	
\label{NumDeno}		
\end{subequations}
From (\ref{EMupdateforObservn}), we observe that $\lambda\Delta Z_{k-1} = h(a_{i})\lambda^{2} + \sigma_{V}(t_{k-1})\lambda(V_{k}-V_{k-1}) = 0$, as both $\lambda^{2}$ and $\lambda(V_{k}-V_{k-1})$ are zero. This allows us to simplify (\ref{num}) as
\begin{align}
\nu(i) = \bp_{k-1}^{+}(i) &+ \frac{\Delta Z_{k-1}}{\left(\sigma_{V}(t_{k-1})\right)^{2}}h(a_{i})\bp_{k-1}^{+}(i) \nonumber\\
&+ \lambda\!\displaystyle\sum_{j=1}^{m}\!\bp_{k-1}^{+}(j)\bQ(i,j) + O(\lambda^2),
\label{NumSimplified}	
\end{align}
and consequently, (\ref{deno}) reduces\footnote{We use here that $\sum_{i=1}^{m}\bQ(i,j)=0$ for all $j=1,\hdots,m$.} to
\begin{align}
\delta = \displaystyle\sum_{i=1}^{m}\nu(i) = 1 + \frac{\Delta Z_{k-1}}{\left(\sigma_{V}(t_{k-1})\right)^{2}}\widehat{h}(t_{k-1}).
\label{DenoSimplified}	
\end{align}
 
Combining (\ref{Intermedpkplus}), (\ref{NumSimplified}) and (\ref{DenoSimplified}), we obtain
{\footnotesize
\begin{align}
\bp_{k}^{+}(i) - \bp_{k-1}^{+}(i) \!=\! \!\left(\!\frac{\Delta Z_{k-1}}{\left(\sigma_{V}(t_{k-1})\right)^{2}}\bp_{k-1}^{+}\left(\!h(a_{i})-\widehat{h}(t_{k-1})\!\right)\right.\nonumber\\
\left. + \lambda\!\displaystyle\sum_{j=1}^{m}\!\bp_{k-1}^{+}(j)\bQ(i,j)\!\right) \!\times
\!\left(\!1 + \frac{\Delta Z_{k-1}}{\left(\sigma_{V}(t_{k-1})\right)^{2}}\widehat{h}(t_{k-1})\!\right)^{\!-1}.
\label{DifferencePosterior}	
\end{align}
}
Up to first order, the second factor in (\ref{DifferencePosterior}) approximates as $1 - \Delta Z_{k-1}\widehat{h}(t_{k-1})/\left(\sigma_{V}(t_{k-1})\right)^{2}$. Using this approximation together with $(\Delta Z_{k-1})^{2} = \lambda(\sigma_{V}(t_{k-1}))^{2}$ (from (\ref{DeltaZsq})), and that $\lambda\Delta Z_{k-1}=0$ (as before), (\ref{DifferencePosterior}) simplifies as
\begin{align}
\bp_{k}^{+}(i) - &\bp_{k-1}^{+}(i) \!= \lambda\displaystyle\sum_{j=1}^{m}\!\bp_{k-1}^{+}(j)\bQ(i,j) + \frac{\bp_{k-1}^{+}(i)}{\left(\sigma_{V}(t_{k-1})\right)^{2}}\times\nonumber\\
&\left(\!h(a_{i})-\widehat{h}(t_{k-1})\!\right)\!\left(\!\Delta Z_{k-1} - \widehat{h}(t_{k-1})\lambda\!\right),
\label{EulerMaruyamaWonham}	
\end{align}
which is exactly the first order (Euler-Maruyama) discretization of the SDE (\ref{WonhamPosteriorSDE}). Specifically, in the limit $\lambda\downarrow 0$, (\ref{EulerMaruyamaWonham}) reduces to (\ref{WonhamPosteriorSDE}). Hence the statement.
\end{proof}


\end{document}